\newtheorem{mydef}{Definition}
\newtheorem{mylem}{Lemma}
\newtheorem{mytheo}{Theorem}
\newtheorem{myrem}{Remark}
\newtheorem{mycol}{Corollary}
\title{\LARGE \bf
Robust Network Reconstruction in Polynomial Time
}
\author{David Hayden, Ye Yuan and Jorge Gon\c{c}alves
\thanks{Department of Engineering, University of Cambridge, UK.
{\tt\small \{dph34, yy311, jmg77\}@cam.ac.uk}}
}
\begin{document}

\maketitle
\thispagestyle{empty}
\pagestyle{empty}

\begin{abstract}
This paper presents an efficient algorithm for robust network reconstruction of Linear Time-Invariant (LTI) systems in the presence of noise, estimation errors and unmodelled nonlinearities. The method here builds on previous work \cite{rQP} on robust reconstruction to provide a practical implementation with polynomial computational complexity. Following the same experimental protocol, the algorithm obtains a set of structurally-related candidate solutions spanning every level of sparsity. We prove the existence of a magnitude bound on the noise, which if satisfied, guarantees that one of these structures is the correct solution. A problem-specific model-selection procedure then selects a single solution from this set and provides a measure of confidence in that solution. Extensive simulations quantify the expected performance for different levels of noise and show that significantly more noise can be tolerated in comparison to the original method.
\end{abstract}

\section{Introduction}

An active topic of dynamical systems research and an ubiquitous problem is that of \emph{network reconstruction}, or inferring information about the structural and dynamical properties of a networked system \cite{rQP, corenet, pappas, csens, dbn, aracne, bool}. This problem is motivated by a diverse range of fields where an unknown system can be described by interconnecting subsystems acting between accessible system states. By perturbing the system and measuring these states, we seek to understand the nature and dynamics of their interactions. In general, this is an underdetermined problem and is further complicated by the fact that not all states may be available for measurement and that there may also be additional, unknown states. Here we focus on noisy LTI systems and seek to obtain reliable structural and dynamic information involving the measured states, whilst leaving the hidden states unrestricted.

One prominent example from Systems Biology is the identification of gene regulatory networks, which describe the interactions between genes via biochemical mechanisms. Despite the stochastic and nonlinear nature of biological systems, a linear-systems description provides an appealing and tractable option. Measurements of the expression levels of individual genes are routinely available (for example from microarray experiments) and a number of these are taken as system states. Additional \lq{}hidden\rq{} states are required to accurately model the effect of protein and other metabolic interactions, any process with higher than first-order dynamics and any important genes that are not selected as system states. In such applications, the assumption of full-state measurement may well lead to incorrect conclusions.

Nevertheless, there are a number of published methods to obtain a representative linear model from full state measurements, for example \cite{corenet} using Linear Matrix Inequalities, \cite{pappas} using a 1-norm residual and \cite{csens} using compressive sensing. There are many other approaches to network reconstruction, for example Bayesian \cite{dbn}, information theoretic \cite{aracne} and Boolean \cite{bool}, in all of which the solution is biased, most commonly towards sparsity, to compensate for the lack of information. The vice of all these methods is that if the true solution is not sparse, a sparse solution will be obtained anyway. 

Dynamical structure functions were introduced in \cite{QP2007} as a means of representing the structure and dynamics of an LTI system at a resolution consistent with the number of measured states. Exactly how much additional information is required to reconstruct a network from the system transfer matrix could then be quantified. In particular, without extra information, it is possible to obtain a solution that justifies any prior assumption. Applications of dynamical structure functions include multi-agent systems \cite{ye1,ye2} in addition to network reconstruction \cite{QP2008}, where necessary and sufficient conditions were given for exact reconstruction from the transfer matrix.

In \cite{rQP}, this approach was made robust to uncertainty in the transfer matrix by obtaining dynamical structure functions that are closest, in some sense, to the data. Since the dynamical structure function is no longer unique, it is also necessary to estimate Boolean network structure, that is, an unweighted directed graph of causal connections. The approach taken was to calculate the optimum dynamical structure function for every possible Boolean network for a given number of measured states, $p$, then use a model selection technique to select the best estimate. This approach suffers chiefly from a high computational complexity of $O(2^p)$, which limits its use to relatively small networks.

Here we present an extension of \cite{rQP} that does not require all of the Boolean network structures to be considered. In fact, we obtain a set of $p^2-p+1$ candidate structures by judiciously removing links from the fully-connected structure, with complexity in the number of structures that must be considered of $O(p^3)$. This set contains one structure for every level of sparsity and hence there is no prior bias towards sparser solutions. 

Section \ref{DSFsec} defines dynamical structure functions and states necessary previous results. The main result of a polynomial-time reconstruction algorithm is then presented in Section \ref{mainsec}. Section \ref{AICsec} introduces a new model selection procedure. Section \ref{simsec} then compares the performance of two variants of the method introduced here with that of \cite{rQP} in extensive random simulations. Conclusions and an outline of future work are given in Section \ref{concsec}.

\section{Dynamical Structure}\label{DSFsec}

We consider Linear Time-Invariant (LTI) systems of the following form:
\begin{equation}\label{LTI}
\begin{aligned}
\dot{x} &= Ax + Bu\\
y &= [\begin{array}{cc}I & 0\end{array}]x
\end{aligned}
\end{equation}

\noindent where $x \in \mathbb{R}^n$ is the full state vector, $u \in \mathbb{R}^m$ is the vector of inputs, $y \in \mathbb{R}^p$ is the vector of measured states, with $0<p<n$, and $I$ is the $p\times p$ identity matrix. That is, we assume that some of the states are directly measured and some are not. It is also possible to consider a more general form of $C$ matrix, as in \cite{enoch}. By eliminating the hidden states, the dynamical structure function representation can be derived (see \cite{QP2008}) as:
\begin{equation}
Y = QY + PU
\end{equation}

\noindent where $Y$ and $U$ are the Laplace transforms of $y$ and $u$ and $Q$ and $P$ are strictly proper transfer matrices. The hollow matrix $Q$ is the \emph{Internal Structure} and dictates direct causal relationships between measured states, which may occur via hidden states. The matrix $P$ is the \emph{Control Structure} and similarly defines the relationships between inputs and measured states that are direct in the sense that they do not occur via any other measured states. The dynamical structure function is defined as $(Q,P)$ and the Boolean dynamical structure is defined as the Boolean matrices $\mathcal{B}(Q)$, $\mathcal{B}(P)$ that have the same zero elements as $Q$ and $P$.

\subsection{Dynamical Structure Reconstruction}

The problem of network reconstruction was cast in \cite{QP2008} as a two-stage process, whereby the transfer matrix $G$ is first obtained from input-output data by standard system identification techniques and the dynamical structure function is then obtained from $G$. Here we state the main results for the second stage of this process. The dynamical structure function for a given state-space realisation is unique, and related to the transfer matrix, $G$, as follows:
\begin{equation}\label{consistency}
G = (I-Q)^{-1}P
\end{equation}

\noindent Whilst every $(Q,P)$ uniquely specifies a $G$, there are many such possible $(Q,P)$s for any given $G$ and a dynamical structure function is said to be \emph{consistent} with a given transfer matrix if, and only if, there exists a state-space realisation for which the dynamical structure function (at the considered resolution) satisfies \eqref{consistency}. Exact reconstruction is therefore possible if, and only if, there is only one $(Q,P)$ that is consistent with $G$, which requires some \emph{a priori} knowledge of $(Q,P)$. Corollary \ref{cor1} defines an experimental protocol under which this condition is met.
\begin{mycol}[\cite{QP2008}] \label{cor1}
If $G$ is full rank and $p$ inputs are applied, where there are $p$ measured states and each input directly and uniquely affects only one measured state (such that the matrix $P$ can be made diagonal), then $(Q,P)$ can be recovered exactly from $G$.
\end{mycol}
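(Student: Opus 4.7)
The plan is to solve for $(Q,P)$ in closed form by inverting $G$ and exploiting the two structural constraints. Since $p$ inputs are applied to $p$ measured states, $G$ is $p\times p$, so the hypothesis that $G$ is full rank means it is invertible as a matrix of rational functions in $s$; this is what makes the problem algebraic rather than merely underdetermined, and is the hook for everything that follows.

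First I would rearrange the consistency relation \eqref{consistency} into $PG^{-1} = I - Q$. The right-hand side has a completely known diagonal, namely the identity, because $Q$ is hollow; and since $P$ is diagonal the $(i,j)$ entry of the left-hand side is simply $P_{ii}(G^{-1})_{ij}$. Comparing diagonal entries on the two sides then yields the $p$ scalar equations $P_{ii}(G^{-1})_{ii}=1$, which determine each $P_{ii}$ as a closed-form function of $G^{-1}$. Substituting these back into the off-diagonal comparison gives $Q_{ij} = -(G^{-1})_{ij}/(G^{-1})_{ii}$ for $i\neq j$, so every entry of $Q$ is likewise pinned down uniquely from $G$ alone.

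The main obstacle I anticipate is showing that the diagonal entries $(G^{-1})_{ii}$ are not identically zero as rational functions of $s$, since otherwise the formulas above are undefined and the inversion is not legitimate. I expect to dispatch this by a consistency argument using the hypothesis itself: the system admits some valid $(Q,P)$ with $Q$ hollow and $P$ a nonzero diagonal transfer matrix (each input directly and uniquely affects its corresponding state), and the relation $P_{ii}(G^{-1})_{ii}=1$ derived above immediately forces $(G^{-1})_{ii}\not\equiv 0$. Once this small check is in hand, uniqueness of $(Q,P)$ follows directly from the closed-form expressions, completing the recovery claim.
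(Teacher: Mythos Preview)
The paper does not supply its own proof of this corollary; it is quoted verbatim from \cite{QP2008} as a prerequisite result, so there is nothing to compare against here. Your argument is correct and is essentially the standard one found in that reference: rewrite \eqref{consistency} as $PG^{-1}=I-Q$, read off each $P_{ii}$ from the diagonal (which is forced to be $1$ because $Q$ is hollow), and then recover $Q_{ij}$ from the off-diagonal entries. Your handling of the only nontrivial point---that $(G^{-1})_{ii}\not\equiv 0$---is fine; you could alternatively note that $I-Q$ is automatically invertible for strictly proper $Q$, so $G$ full rank forces $P$ (hence every $P_{ii}$) to be invertible, which gives the same conclusion without appealing to the verbal hypothesis that each input ``affects'' its state.
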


\noindent Essentially the zero elements of $P$ comprise sufficient knowledge to obtain $(Q,P)$ directly from \eqref{consistency}. Here we assume that the conditions of the above corollary have been met, which ensures solution uniqueness when $G$ is known perfectly.

\subsection{Robust Dynamical Structure Reconstruction}

In practice, the transfer matrix $G$ will be a noisy estimate of the true transfer matrix, as considered in \cite{rQP}. As a result, the dynamical structure function obtained directly from $G$ may not be the best approximation of that of the true system. The noise is expressed as a perturbation on the true transfer matrix $G_{t}$, for example as feedback uncertainty: $G_t = (I+\Lambda)^{-1}G$ for some transfer matrix $\Lambda$. Since $G$ will typically admit a fully-connected dynamical structure function, a strategy to estimate the Boolean dynamical structure consistent with $G_t$ is required.

Consider the $i^{th}$ Boolean dynamical structure function and denote $(Q_i,P_i)$ as a dynamical structure function with this Boolean structure. We can relate $G$ to a transfer matrix $G_i$ that is consistent with $(Q_i,P_i)$ by $G = (I+\Delta_i)G_i$ for some transfer matrix $\Delta_i$. In this case, \eqref{consistency} becomes: $(I+\Delta_i)^{-1}G = (I-Q_i)^{-1}P_i$, which can be rearranged as:
\begin{equation}
\begin{aligned}
\Delta_i &= GP_i^{-1}(I - Q_i) - I \\
&= GX_i - I
\end{aligned}
\end{equation}

\noindent where $X_i = P_i^{-1}(I - Q_i)$ and has the same non-diagonal Boolean structure as $Q_i$. By minimising some norm of $\Delta_i$, with respect to $X_i$, we can obtain the $X_i$ and hence the corresponding dynamical structure function that is consistent with the closest transfer matrix to $G$.

The approach of \cite{rQP} was to minimise $\|\Delta_i\|$ for every possible Boolean $Q_i$ (of which there are $2^{p^2-p}$ since $Q$ has $p^2-p$ degrees of freedom), then use Akaike\rq{}s Information Criterion (AIC) \cite{AIC} to select a solution by penalising the number of nonzero elements in $Q$. Specifically, let $\mathcal{X}_i$ be the set of all $X$ that satisfy the constraints of the $i^{th}$ Boolean $X$, and minimise the Frobenious norm over $s=j\omega$ of $\Delta_i$ as follows:
\begin{equation}\label{mindelta}
\delta_i^2 = \inf_{X \in \mathcal{X}_i} \|GX - I\|_F^2
\end{equation}

\noindent to obtain a measure of the smallest distance $\delta_i$ from $G$ to $G_i$. This choice of norm allows the problem to be cast as a least squares optimisation, and we denote this method $M_0$.

\begin{algorithm}
\caption*{\textbf{Algorithm $M_0$}}
\label{alg0}
\begin{algorithmic}
\For{$i=1 \to 2^{p^2-p}$}
\State $\delta_i^2 = \inf_{X \in \mathcal{X}_i} \|GX - I\|_F^2$
\State $X_i = \arg\inf_{X \in \mathcal{X}_i} \|GX - I\|_F^2$
\EndFor
\State Apply AIC to the set $\{X_i\}$.
\end{algorithmic}
\end{algorithm}

Hence every Boolean structure $\mathcal{B}(X_i)$ can be associated with a distance measure $\delta_i$ from \eqref{mindelta} and a dynamic structure $X_i$, which is the corresponding minimising argument. The principal problem with this approach is that the computational complexity is dominated by the number of optimisations that must be performed, which can be reduced to $p2^p$ by performing the optimisation of the columns of $X$ separately.

\section{Main Result}\label{mainsec}

\subsection{A Polynomial Time Algorithm}\label{ssecpoly}

Here we propose an algorithm with polynomial complexity to estimate the dynamical structure function of $G_t$, under the conditions of Corollary \ref{cor1}. First, an iterative procedure is used to obtain a set $\mathbb{S}$, containing Boolean internal structures, with one structure ($S^j$) for each level of sparsity ($j$ links). Then a model selection procedure is applied to this reduced set to select a single solution. This method is denoted $M_2$ and defined as follows:

\begin{algorithm}
\caption*{\textbf{Algorithm $M_2$}}
\label{alg2}
\begin{algorithmic}
\State Set $S^{p^2-p}$ as the fully-connected structure.
\For{$j = p^2-p \to 1$}
\State Remove one link of $S^{j}$ at a time to obtain a set of $j$ structures with $j-1$ links and calculate $\delta^{j-1}$ for each of these structures.
\State Set $S^{j-1}$ as the minimum-$\delta^{j-1}$ structure.
\EndFor
\State Set $S^0$ as the decoupled structure.
\State Apply a model selection procedure to the set $\mathbb{S} = \{S^j\}$.
\end{algorithmic}
\end{algorithm}

\noindent Fig. \ref{m3_eg} illustrates this procedure for a $p=3$ example. The number of structures that must be considered is exactly $\frac{1}{2}(p^4-2p^3+2p^2-p) + 1$. By optimising the columns of $X$ separately, the overall number of optimisations that must be performed is of the order $O(p^3)$, and since the complexity of the optimisations is also polynomial, the overall computational complexity is polynomial. 

The reasoning behind this approach is that the fully-connected structure is composed of all the links belonging to the true structure plus extra links that afford it a smaller $\delta$ by better modelling the noise. It is intuitive that if the level of noise is not too high, removing false links should have a smaller effect on $\delta$ than removing true links, in which case all the false links would be removed first and the true structure would be encountered. In fact, we will show that if the noise is sufficiently small (in some norm) then this is always the case.

\begin{figure}[t]
\centering
\begin{tikzpicture}
[state/.style={circle,draw, inner sep=0mm, minimum size=2.5mm, font=\small},
link/.style={->,>=latex',semithick},dlink/.style={->,>=latex',dashed,semithick},ddlink/.style={->,>=latex',semithick}]


\node[font=\small]		at (-3,-0.5)		{$S^6$};
\node[state] 	(x1) 		at (-3.5,0)		{$x_1$};
\node[state] 	(x2)		at (-2.5,0) 		{$x_2$};
\node[state]	(x3)		at (-3,0.866)  	{$x_3$};

\draw [link] (x1) to (x2);
\draw [link] (x2) to (x3);
\draw [link] (x3) to (x1);

\draw [dlink] (x3) to [bend left=30] (x2);
\draw [dlink] (x2) to [bend left=30] (x1);
\draw [dlink] (x1) to [bend left=30] (x3);

\node 		at (-2,0.433)		{$\Longrightarrow$};

\node[font=\small]			at (-1,-0.5)		{$S^5$};
\node[state] 	(x1b) 		at (-1.5,0)		{$x_1$};
\node[state] 	(x2b)		at (-0.5,0) 		{$x_2$};
\node[state]	(x3b)		at (-1,0.866)  	{$x_3$};

\draw [link] (x1b) to (x2b);
\draw [link] (x2b) to (x3b);
\draw [link] (x3b) to (x1b);

\draw [dlink] (x2b) to [bend left=30] (x1b);
\draw [dlink] (x1b) to [bend left=30] (x3b);

\node 		at (0,0.433)		{$\Longrightarrow$};

\node[font=\small]			at (1,-0.5)		{$S^4$};
\node[state] 	(x1c) 		at (0.5,0)		{$x_1$};
\node[state] 	(x2c)	 		at (1.5,0) 		{$x_2$};
\node[state]	(x3c)			at (1,0.866)  	{$x_3$};

\draw [link] (x1c) to (x2c);
\draw [link] (x2c) to (x3c);
\draw [link] (x3c) to (x1c);

\draw [dlink] (x1c) to [bend left=30] (x3c);

\node 		at (2,0.433)		{$\Longrightarrow$};

\node[font=\small]			at (3,-0.5)		{$S^3$};
\node[state] 	(x1d) 		at (2.5,0)		{$x_1$};
\node[state] 	(x2d) 		at (3.5,0) 		{$x_2$};
\node[state]	(x3d)		at (3,0.866)  	{$x_3$};

\draw [link] (x1d) to (x2d);
\draw [link] (x2d) to (x3d);
\draw [link] (x3d) to (x1d);

\node 		at (-2,-1.567)		{$\Longrightarrow$};

\node[font=\small]			at (-1,-2.5)		{$S^2$};
\node[state] 	(x1e) 		at (-1.5,-2)		{$x_1$};
\node[state] 	(x2e)		at (-0.5,-2) 		{$x_2$};
\node[state]	(x3e)		at (-1,-1.134)  	{$x_3$};

\draw [link] (x1e) to (x2e);
\draw [link] (x2e) to (x3e);

\node 		at (0,-1.567)		{$\Longrightarrow$};

\node[font=\small]			at (1,-2.5)		{$S^1$};
\node[state] 	(x1f) 		at (0.5,-2)		{$x_1$};
\node[state] 	(x2f)			at (1.5,-2) 		{$x_2$};
\node[state]	(x3f)			at (1,-1.134)  	{$x_3$};

\draw [link] (x1f) to (x2f);

\node 		at (2,-1.567)		{$\Longrightarrow$};

\node[font=\small]			at (3,-2.5)		{$S^0$};
\node[state] 	(x1g) 		at (2.5,-2)		{$x_1$};
\node[state] 	(x2g)	 	at (3.5,-2) 		{$x_2$};
\node[state]	(x3g)		at (3,-1.134)  	{$x_3$};

\end{tikzpicture}
\caption {An example set $\mathbb{S}$ for Method $M_2$ for a network with three measured states. Solid arrows denote true links and dashed arrows denote false links. Starting with the fully-connected structure ($S^6$), one network is obtained for each level of sparsity by successively removing the link which results in the smallest increase in $\delta$. A model selection technique is then used to select a single solution from this set. In this example, the true network is $S^3$.}
\label{m3_eg}
\end{figure}
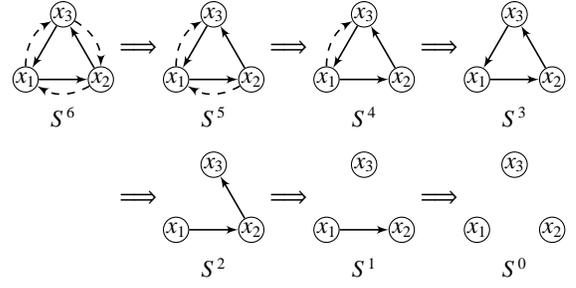

\subsection{An Intermediate Algorithm}
We note that an intermediate algorithm can be defined, with the same two-stage approach of $M_2$ but making use of all $2^{p^2-p}$ Boolean structures and hence with exponential complexity. Rather than obtain the set $\mathbb{S}$ iteratively, this method obtains a set $\mathbb{T}=\{T^j\}$, where each $T^j$ is the minimum-$\delta$ Boolean structure over all Boolean structures with $j$ links.

\begin{algorithm}
\caption*{\textbf{Algorithm $M_1$}}
\label{alg1}
\begin{algorithmic}
\For{$j = 0 \to p^2-p$}
\State Compute $\delta^j$ for all Boolean internal structures with $j$ links.
\State Set $T^j$ as the minimum-$\delta^j$ structure.
\EndFor
\State Apply a model selection procedure to the set $\mathbb{T} = \{T^j\}$.
\end{algorithmic}
\end{algorithm}

Note that the set $\mathbb{T}$ contains the minimum-AIC structures for each level of sparsity, and hence Method $M_0$ is only able to select a solution from this set. If the true Boolean structure is not in the set $\mathbb{T}$, then neither $M_0$ nor $M_1$ can obtain the correct structure, from which we can define:
\begin{mydef}[$M_1$ Solvability]
{A given reconstruction problem is solvable by Method $M_1$ (and $M_0$) if the true Boolean internal structure is in the set $\mathbb{T}$.}
\end{mydef}

\noindent Therefore, if a problem is $M_1$ solvable, Method $M_1$ will always obtain a set $\mathbb{T}$ that contains the true Boolean structure. This step has separated the uncertainty inherent in the problem due to noise from that due to the model selection process. If the model selection stage is not correct but the problem is $M_1$ solvable, then we have a relatively small set of candidate structures, one of which is the true structure.

In summary, $M_0$ considers all $2^{p^2-p}$ Boolean structures and selects a single solution; $M_1$ considers all $2^{p^2-p}$ Boolean structures, selects the best-fitting structure for each number of links to form a subset of $p^2 -p+1$ structures and then selects a single solution from this subset; $M_2$ iteratively finds a set of $p^2-p+1$ Boolean structures and selects a single solution from this set, without having to consider all possible structures. We will next consider a sufficient condition under which a problem can be solved by $M_2$, and it will be seen that this condition is also sufficient for the problem to be $M_1$ solvable.

\subsection{Solvability Conditions for $M_2$}

We define the solvability of $M_2$ as follows:
\begin{mydef}[$M_2$ Solvability]
{A given reconstruction problem is solvable by Method $M_2$ if the true Boolean internal structure is in the set $\mathbb{S}$.}
\end{mydef}

First it is noted that the false links of the fully-connected structure can be removed in any order to obtain the true structure, so the path to the true structure is not unique. A set of \emph{allowable} Boolean structures can be defined as all those that contain at least all of the true links. All other structures with no fewer links than the true structure are \emph{non-allowable}, as to encounter any one of these will mean that the true structure will not be reached. Fig. \ref{allowables} shows $\delta$ values plotted against number of links for an example allowable set plus valid \lq{}paths\rq{} which may be taken between structures by removing one link. The following Lemma provides a sufficient condition for $M_2$ solvability:
\begin{mylem}\label{solvlem}
{A given reconstruction problem is solvable by Method $M_2$ if all allowable structures have smaller $\delta$ than all non-allowable structures with the same number of links.}
\end{mylem}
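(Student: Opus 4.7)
The plan is to prove this by downward induction on the sparsity index $j$, showing that every $S^j$ produced by Method $M_2$ with $j \geq k$ (where $k$ denotes the number of links in the true structure) is \emph{allowable}. Once we reach $j=k$, the only allowable structure with exactly $k$ links is the true structure itself, giving $S^k$ = true structure and hence $M_2$ solvability.

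For the base case, observe that $S^{p^2-p}$ is defined to be the fully-connected Boolean structure, which trivially contains every true link and is therefore allowable. For the inductive step, suppose $S^j$ is allowable for some $j > k$. Because $S^j$ contains all $k$ true links but has $j > k$ links in total, at least one link of $S^j$ is false. The algorithm considers the $j$ structures obtained from $S^j$ by removing a single link: among these, any structure obtained by removing one of the (at least one) false links is still allowable and has $j-1$ links, while those obtained by removing a true link are non-allowable.

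Now invoke the hypothesis of the lemma: at the sparsity level $j-1$, every allowable structure has strictly smaller $\delta^{j-1}$ than every non-allowable structure. Since the candidate set contains at least one allowable structure with $j-1$ links, the minimiser $S^{j-1} = \arg\min \delta^{j-1}$ over this candidate set cannot be any non-allowable candidate — the allowable candidate already beats all non-allowable ones by hypothesis. Therefore $S^{j-1}$ is allowable, closing the induction.

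Iterating down to $j=k$ gives an allowable $S^k$ with exactly $k$ links; since by definition the true structure is the unique structure containing all true links and no more, $S^k$ must equal the true structure, so the true Boolean internal structure lies in $\mathbb{S}$. The main delicate point is to check that the inductive step never gets stuck — i.e., that an allowable candidate with $j-1$ links always exists in the locally-generated candidate set — which is exactly why the strict inequality $j > k$ (giving the existence of a removable false link in $S^j$) is needed; the rest of the argument follows purely from the comparison hypothesis applied level by level.
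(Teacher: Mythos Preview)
Your proof is correct and follows essentially the same argument as the paper's: both observe that from any allowable structure strictly above the true structure one can remove a false link to reach another allowable structure, and that the comparison hypothesis then forces the algorithm to select an allowable candidate at every stage until the true structure is reached. Your version is simply a more explicit inductive write-up of the paper's two-sentence sketch.
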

\begin{proof}
For every allowable structure that is not the true structure, another allowable structure can always be obtained by removing one link, so a path always exists to another allowable structure. If the condition of the Lemma holds, at every stage in the $M_2$ algorithm an allowable structure will be selected in preference to a non-allowable structure until the true structure is reached.
\end{proof}

This is equivalent to all non-allowable structures being within the open shaded region in Fig. \ref{allowables}. Note that if the condition of Lemma \ref{solvlem} holds, this also implies that the problem is $M_1$ solvable. We now derive, for each Boolean structure, an expression for the deviation of the $\delta$ value in the case of noise from its nominal (no noise) value. The level of noise such that the condition of Lemma \ref{solvlem} is met can then be characterised.

From \eqref{mindelta}, the value of $\delta_i$ for the $i^{th}$ Boolean structure is given by:
\begin{equation}\label{mindelta2}
\begin{aligned}
\delta_i^2 &= \inf_{X \in \mathcal{X}_i} \|GX - I\|_F^2 \\
 &= \inf_{Y \in \mathcal{Y}_i} \|AY- b\|_2^2 
\end{aligned}
\end{equation}

\noindent where $A = I \otimes G$, $Y = \mathrm{vec}(X)$ and $b=\mathrm{vec}(I)$ where $I$ is the $p\times p$ identity matrix, $\otimes$ is the Kronecker product and $\mathrm{vec}(\cdot)$ is the vectorization operator. The set $\mathcal{Y}_i$ contains a vector $Y = \mathrm{vec}(X)$ for each element $X \in \mathcal{X}_i$. Consider a subset of Boolean internal structures, which can be obtained from the fully-connected structure by constraining at most one element in each column of $X$ to be zero. The following Lemma relates the $\delta$ values of these structures to that of the fully-connected structure.

\begin{figure}[t]
\centering
\begin{tikzpicture}
[state/.style={circle,draw, inner sep=0mm, minimum size=5mm},
link/.style={->,>=latex',semithick},
dlink/.style={dashed,semithick},
slink/.style={semithick},
xx/.style={coordinate,draw}]


\draw [link] (0,-0.1) to (0,4);
\draw [slink] (-0.1,0) to (0.7,0);
\draw [slink] (0.8,0) to (6,0);

\draw [slink] (0.7,0) to (0.8,0.2);
\draw [slink] (0.7,0) to (0.6,-0.2);
\draw [slink] (0.8,0) to (0.9,0.2);
\draw [slink] (0.8,0) to (0.7,-0.2);

\node at (-0.5,4) {$\delta$};
\node at (3,-1) {Number of Links};

\draw [slink] (6,-0.1) to (6,0.1);
\node [font=\small] at (6,-0.5) {full};
\draw [slink] (4.5,-0.1) to (4.5,0.1);
\node [font=\small] at (4.5,-0.5) {full-1};
\draw [slink] (3,-0.1) to (3,0.1);
\node [font=\small] at (3,-0.5) {full-2};
\draw [slink] (1.5,-0.1) to (1.5,0.1);
\node [font=\small] at (1.5,-0.5) {true};
\node [font=\small] at (0,-0.5) {0};

\node [font=\small] at (-0.5,0) {0};

\fill [color = lightgray] (1.5,3.1) -- (1.5,4) -- (6,4) -- (6,0.5) -- (4.5,1.9) -- (3,2.8) -- (1.5,3.1);

\node [xx] (full) at (6,0.5) {};
\node [font=\small] at (full) {$\times$};
\draw [slink] (-0.1,0.5) to (0.1,0.5);
\node [font=\small] at (-0.5,0.5) {$\delta_{full}$};

\node [xx] (1a) at (4.5,1) {};
\node [font=\small] at (1a) {$\times$};
\node [xx] (1b) at (4.5,1.3) {};
\node [font=\small] at (1b) {$\times$};
\node [xx] (1c) at (4.5,1.9) {};
\node [font=\small] at (1c) {$\times$};

\node [xx] (2a) at (3,1.6) {};
\node [font=\small] at (2a) {$\times$};
\node [xx] (2b) at (3,2.5) {};
\node [font=\small] at (2b) {$\times$};
\node [xx] (2c) at (3,2.8) {};
\node [font=\small] at (2c) {$\times$};

\node [xx] (true) at (1.5,3.1) {};
\node [font=\small] at (true) {$\times$};
\draw [slink] (-0.1,3.1) to (0.1,3.1);
\node [font=\small] at (-0.5,3.1) {$\delta_{true}$};

\draw [slink] (full) to (1a);
\draw [slink] (full) to (1b);
\draw [slink] (full) to (1c);

\draw [slink] (1a) to (2a);
\draw [slink] (1a) to (2b);
\draw [slink] (1b) to (2a);
\draw [slink] (1b) to (2c);
\draw [slink] (1c) to (2b);
\draw [slink] (1c) to (2c);

\draw [slink] (2a) to (true);
\draw [slink] (2b) to (true);
\draw [slink] (2c) to (true);

\end{tikzpicture}
\caption{An example problem showing $\delta$ values plotted against number of links for the allowable set of Method $M_2$. Allowable Boolean structures (those which contain at least all of the true links) are marked by an $\times$. Allowable paths (those that may be taken between allowable structures by removing a single link) are shown as solid lines. The problem is $M_2$ solvable, by Lemma \ref{solvlem}, if all non-allowable structures are within the open shaded region.}
\label{allowables}
\end{figure}
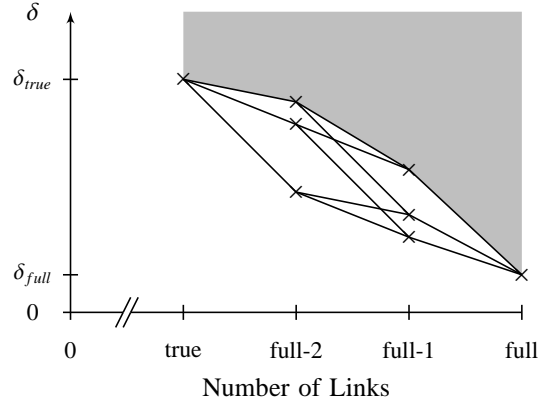

\begin{mylem}\label{optlem}
For any Boolean structure $\mathcal{B}(X_i)$ with no more than one zero element in each column, the value of $\delta_i$ is related to that of the fully-connected structure as follows:
\begin{equation}\label{di_singles}
\delta_i^{2} = \delta_{full}^{2} + \frac{1}{2\pi} \int_{-\infty}^{\infty} \sum_{j}\left(\frac{|Y_{full}(j)|^2}{A^+(j,j)}\right)  \;d\omega
\end{equation}
\noindent for all elements $j$ of $Y$ that are constrained in the $i^{th}$ structure. Here $Y_{full} = \mathrm{vec}(X_{full})$ for the fully-connected structure $X_{full}$, which has $\delta$ value $\delta_{full}$, $A^+ \coloneqq (A\rq{}A)^{-1}$ and $\rq{}$ denotes complex conjugate transpose. 
\end{mylem}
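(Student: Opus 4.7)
The plan is to exploit two orthogonal forms of decoupling so that \eqref{di_singles} reduces to a sum of single-constraint least-squares perturbations with a known closed form. First, since the Frobenius norm in \eqref{mindelta} is interpreted as $\frac{1}{2\pi}\int \|G(j\omega)X(j\omega)-I\|_F^2\,d\omega$ and the Boolean constraints hold identically in $\omega$, the optimization \eqref{mindelta2} can be performed pointwise in frequency; it therefore suffices to derive the integrand version of \eqref{di_singles} and then integrate.

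Second, the Kronecker structure $A=I\otimes G$ makes $A'A=I\otimes(G'G)$ block diagonal, so at each frequency the cost separates across columns of $X$ as $\|AY-b\|_2^2 = \sum_j\|Gx_j-e_j\|_2^2$, where $x_j$ is the $j$-th column of $X$ and $e_j$ the $j$-th standard basis vector. The hypothesis that $\mathcal{B}(X_i)$ has at most one zero per column means the constrained subproblems also decouple column-wise: each column carries at most one linear equality $e_k'x_j=0$.

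For a single such column-subproblem I would apply a standard KKT argument. The unconstrained minimizer is $x_j^\star=(G'G)^{-1}G'e_j$, and imposing $e_k'x_j=0$ introduces one Lagrange multiplier $\lambda$; stationarity gives $x_j^{\star\star}=x_j^\star-\lambda(G'G)^{-1}e_k$ with $\lambda=e_k'x_j^\star\,/\,e_k'(G'G)^{-1}e_k$, and substitution into the cost shows that the increase in squared residual over the unconstrained value equals $|e_k'x_j^\star|^2/(e_k'(G'G)^{-1}e_k)$. Translating to the vectorized indexing, $e_k'x_j^\star$ is precisely the entry $Y_{full}(j)$ at the vectorized index of the constrained element, while $e_k'(G'G)^{-1}e_k$ is the corresponding diagonal entry of $A^+=I\otimes(G'G)^{-1}$.

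Summing these per-column increments over all constrained indices (at most one per column, so they add without cross-terms either within or across columns) and reinstating $\frac{1}{2\pi}\int d\omega$ yields \eqref{di_singles}. I expect the only real obstacle to be careful bookkeeping: keeping the column index of $x_j$, the row index $k$ of the constrained entry, and the vectorized index in $Y$ all consistent, and handling the complex-conjugate transpose correctly since $G=G(j\omega)$ is complex pointwise. The crucial structural identity that drives the whole argument is $A^+=I\otimes(G'G)^{-1}$, which makes the denominators in the summation uniform across the block diagonal and allows the per-column KKT formula to be read off directly from the diagonal of $A^+$.
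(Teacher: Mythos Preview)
Your proposal is correct and follows essentially the same approach as the paper: both solve the constrained least-squares problem via Lagrange multipliers/KKT conditions and both exploit the block-diagonal structure $A^+=I\otimes(G'G)^{-1}$ to obtain the diagonal-sum formula. The only organisational difference is that you decouple into per-column subproblems first and then apply the single-constraint KKT formula, whereas the paper stays with the full vectorized problem, derives the general expression $\delta_i^2=\delta_{full}^2+\frac{1}{2\pi}\int Y_{full}'C_iD_i^{-1}C_i'Y_{full}\,d\omega$, and then observes that $D_i=C_i'A^+C_i$ is diagonal under the at-most-one-per-column hypothesis.
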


The proof is given in Appendix \ref{appopt} and follows from writing \eqref{mindelta2} as a constrained optimisation problem. In particular, this special case applies to all structures with only one element in total constrained to be zero. We now consider directly how a perturbation on the true transfer matrix $G_t$ affects $\delta_i$ and apply a recursive argument to make use of this special case.

Write the feedback uncertainty in $G_t$ as $\Lambda = \epsilon \Lambda_0$, where $\epsilon = \max_{i,j}\{\|\Lambda(i,j)\|_{\infty}\}$, such that $G = (I+\epsilon\Lambda_0)G_t$. Now given a perturbation $\epsilon\Lambda_0$, parameterise the solution for the $\delta$ value of the $i^{th}$ structure by $\epsilon$ as $\delta_i(\epsilon)$. We first consider the structures obtained by constraining one link of $Y_{full}$.

\begin{mylem}\label{mainlem1}
{Given an estimate of the true transfer matrix $G_t$ as $G = (I+\epsilon\Lambda_0)G_t$, the smallest distance $\delta_i$ from $G_i$ to $G$ for every Boolean structure $\mathcal{B}(X_i)$ with only one zero element is given by:
\begin{equation}
\delta_i^2(\epsilon) = \delta_i^2(0) + \delta_{full}^2(\epsilon) + f_i(\epsilon)
\end{equation}
\noindent where $f_i(\epsilon)$ is continuous and satisfies $f_i(0) = 0$.}
\end{mylem}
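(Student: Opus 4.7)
The plan is to obtain the stated decomposition by applying Lemma \ref{optlem} at both the perturbed and the nominal transfer matrix, and then to exhibit $f_i$ as the difference of the resulting integrals, whose continuity at $\epsilon = 0$ follows from standard perturbation arguments.

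First I would parameterise the data by $\epsilon$: write $A(\epsilon) = I \otimes G(\epsilon) = I \otimes (I + \epsilon\Lambda_0)G_t$, $A^+(\epsilon) = (A(\epsilon)'A(\epsilon))^{-1}$, and let $Y_{full}(\epsilon) = A^+(\epsilon) A(\epsilon)' b$ denote the unconstrained least-squares minimiser for the fully-connected structure. Since $\mathcal{B}(X_i)$ has exactly one zero element, say at index $j$, Lemma \ref{optlem} applies and gives
\begin{equation*}
\delta_i^2(\epsilon) \;=\; \delta_{full}^2(\epsilon) \;+\; \frac{1}{2\pi}\int_{-\infty}^{\infty} \frac{|Y_{full}(\epsilon)(j)|^2}{A^+(\epsilon)(j,j)}\, d\omega .
\end{equation*}
Evaluating at $\epsilon = 0$, I would note that $G(0) = G_t$, so under the Corollary~\ref{cor1} hypothesis the fully-connected structure fits $G_t$ exactly (take $X = G_t^{-1}$), hence $\delta_{full}^2(0) = 0$, and the identity collapses to
\begin{equation*}
\delta_i^2(0) \;=\; \frac{1}{2\pi}\int_{-\infty}^{\infty} \frac{|Y_{full}(0)(j)|^2}{A^+(0)(j,j)}\, d\omega .
\end{equation*}
Subtracting and defining
\begin{equation*}
f_i(\epsilon) \;\coloneqq\; \frac{1}{2\pi}\int_{-\infty}^{\infty}\!\left[\frac{|Y_{full}(\epsilon)(j)|^2}{A^+(\epsilon)(j,j)} - \frac{|Y_{full}(0)(j)|^2}{A^+(0)(j,j)}\right] d\omega
\end{equation*}
gives the decomposition stated in the lemma, with $f_i(0) = 0$ by construction.

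It then remains to verify that $f_i$ is continuous. Pointwise in $\omega$ this is routine: $A(\epsilon)$ is an affine, hence continuous, function of $\epsilon$; $G(\epsilon)$ inherits full rank from $G_t$ on an open neighbourhood of $\epsilon = 0$, so $A(\epsilon)'A(\epsilon)$ stays invertible and both $A^+(\epsilon)$ and $Y_{full}(\epsilon)$ depend continuously on $\epsilon$ at each $\omega$. The main obstacle is promoting this pointwise continuity through the improper integral over all $\omega$. I would address this by invoking dominated convergence: since $G_t$ and $\Lambda_0$ are proper rational transfer matrices, the integrand is a rational function of $(\epsilon,\omega)$ whose modulus admits, for $\epsilon$ in a sufficiently small neighbourhood of $0$, a uniform majorant of the form $C/(1+\omega^{2})$ (this follows because the poles of $G(\epsilon)$ and of $A^+(\epsilon)$ depend continuously on $\epsilon$ and remain bounded away from the imaginary axis for small $\epsilon$, while the numerator stays uniformly bounded). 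Dominated convergence then yields $\lim_{\epsilon \to 0} f_i(\epsilon) = 0$ and, applied at any $\epsilon_0$ in this neighbourhood, gives continuity of $f_i$, completing the proof.
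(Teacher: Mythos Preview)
Your proof is correct and follows essentially the same route as the paper: both apply Lemma~\ref{optlem} to write $\delta_i^2(\epsilon) - \delta_{full}^2(\epsilon)$ as a single-term integral, identify $\delta_i^2(0)$ as that integral evaluated at $\epsilon=0$ (using $\delta_{full}^2(0)=0$ from Corollary~\ref{cor1}), and take $f_i$ to be the resulting difference. Your dominated-convergence argument for passing continuity through the improper $\omega$-integral is in fact more careful than the paper's treatment, which expands the integrand via auxiliary perturbations $\Lambda_1,\Lambda_2,\alpha_j,\beta_j$ and then simply asserts continuity of $f_i$ without addressing the integration.
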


The proof is given in Appendix \ref{applem1} and essentially consists of expressing the uncertainty in $\delta_i^2(\epsilon)$ directly using Lemma \ref{optlem}. For any perturbation $\epsilon\Lambda_0$ on $G_t$, the corresponding deviation of $\delta_i^2(\epsilon)$ from its nominal ($\epsilon = 0$) value is therefore given by $\delta_{full}^2(\epsilon) + f_i(\epsilon)$. The following Lemma extends this to describe the deviation of $\delta_i^2(\epsilon)$ for a general $X_i$.

\begin{mylem}\label{mainlem2}
{Given $G = (I+\epsilon\Lambda_0)G_t$, the smallest distance $\delta_i$ from $G_i$ to $G$ for every Boolean structure $\mathcal{B}(X_i)$ is given by:
\begin{equation}
\delta_i^2(\epsilon) = \delta_i^2(0) + \delta_j^2(\epsilon) + f_{ij}(\epsilon)
\end{equation}
\noindent for any Boolean structure $\mathcal{B}(X_j)$ from which $\mathcal{B}(X_i)$ can be obtained by constraining one link. The function $f_{ij}(\epsilon)$ is continuous and satisfies $f_{ij}(0) = 0$.}
\end{mylem}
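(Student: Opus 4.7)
The plan is to leverage Lemma \ref{optlem} in a \emph{shifted} way: rather than measure $\delta_i^2$ relative to the fully-connected structure, I would apply the same single-constraint expression relative to $\mathcal{B}(X_j)$. Since $\mathcal{B}(X_i)$ is obtained from $\mathcal{B}(X_j)$ by constraining exactly one additional link, the step from $X_j$ to $X_i$ is structurally identical to the one-zero case already handled. The core observation is that the derivation underlying Lemma \ref{optlem} is really about appending a single equality constraint to a least-squares problem, and nothing in it genuinely requires the base problem to be the literal fully-connected structure.

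Concretely, I would first rewrite the problem \eqref{mindelta2} for $\mathcal{X}_j$ as a \emph{reduced} least-squares problem $\inf_{Y_j} \|A_j Y_j - b\|_2^2$, where $A_j$ is the submatrix of $A = I \otimes G$ obtained by deleting the columns associated with the entries of $Y$ that are already zeroed by $\mathcal{B}(X_j)$. Its value is $\delta_j^2$ and its minimiser $Y_j^\ast = A_j^{+}b$ is unique under the Corollary \ref{cor1} setting. Applying Lemma \ref{optlem} to this reduced problem, treating $X_j$ as the ``full'' structure and the single extra zero as the one constraint, yields
\begin{equation*}
\delta_i^2 \;=\; \delta_j^2 \;+\; \frac{1}{2\pi}\int_{-\infty}^{\infty} \frac{|Y_j^\ast(k)|^2}{(A_j' A_j)^{-1}(k,k)} \; d\omega,
\end{equation*}
where $k$ is the index of the single extra element that $\mathcal{B}(X_i)$ zeros out. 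Call this integral $h_{ij}$.

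Next, I would make the dependence on $\epsilon$ explicit by substituting $G(\epsilon) = (I+\epsilon\Lambda_0)G_t$ throughout: at each frequency, $A_j(\epsilon)$, its Gramian $A_j(\epsilon)' A_j(\epsilon)$ and the minimiser $Y_j^\ast(\epsilon)$ all depend continuously on $\epsilon$ on a neighbourhood of $0$ where $A_j$ retains full column rank. The integrand of $h_{ij}(\epsilon)$ is therefore continuous in $\epsilon$, and continuity of the integral follows by a standard dominated-convergence argument under the same finiteness hypotheses used in Lemma \ref{mainlem1}. Hence $h_{ij}$ is continuous in $\epsilon$.

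Finally, to arrive at the claimed form, evaluate at $\epsilon = 0$: since $\mathcal{B}(X_j)$ contains every true link, $\delta_j^2(0) = 0$ and so $\delta_i^2(0) = h_{ij}(0)$. Subtracting from the $\epsilon$-dependent identity gives
\begin{equation*}
\delta_i^2(\epsilon) \;=\; \delta_i^2(0) \;+\; \delta_j^2(\epsilon) \;+\; \bigl(h_{ij}(\epsilon) - h_{ij}(0)\bigr),
\end{equation*}
so defining $f_{ij}(\epsilon) := h_{ij}(\epsilon) - h_{ij}(0)$ gives the desired decomposition with $f_{ij}$ continuous and $f_{ij}(0) = 0$. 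I expect the main obstacle to be the first step: carefully justifying that Lemma \ref{optlem} transfers verbatim to the reduced problem, because the statement of that lemma is phrased explicitly in terms of $X_{full}$ and $Y_{full}$. Once this ``relativisation'' is established, the continuity argument and the algebraic rearrangement at $\epsilon = 0$ are routine.
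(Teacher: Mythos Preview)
Your approach coincides with the paper's: both delete from $A=I\otimes G$ the columns already zeroed by $\mathcal{B}(X_j)$, recast $\delta_j^2$ as the unconstrained minimum of the reduced problem $\|\tilde A_j \tilde Y_j - b\|_2^2$, and then apply the single-constraint formula to step from $j$ to $i$. The only cosmetic difference is that the paper invokes Lemma~\ref{mainlem1} directly on the reduced data $(\tilde A_j,b)$ (noting $\tilde A_j = (I + I\otimes\epsilon\Lambda_0)\tilde A_t$ has exactly the required form), which already packages Lemma~\ref{optlem} together with the continuity argument, whereas you invoke Lemma~\ref{optlem} and argue continuity of $h_{ij}(\epsilon)$ separately.

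One caveat worth flagging: in your final rearrangement you assert $\delta_j^2(0)=0$ on the grounds that $\mathcal{B}(X_j)$ contains every true link, but the lemma's hypotheses place no such restriction on $X_j$. Without that assumption the evaluation at $\epsilon=0$ gives $h_{ij}(0)=\delta_i^2(0)-\delta_j^2(0)$, and the displayed identity would acquire an extra $-\delta_j^2(0)$ term. The paper's proof is equally silent on this point (it just says ``the proof follows as before''), and indeed the lemma as literally stated forces $\delta_j^2(0)=0$ upon setting $\epsilon=0$; so your reading is the one that makes the statement consistent, but you should not present it as a consequence of the hypotheses.
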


The proof is given in Appendix \ref{applem2}, where $X_j$ is treated as the solution to a new, unconstrained problem and hence Lemma \ref{mainlem1} can be applied. Using the results of Lemmas \ref{mainlem1} and \ref{mainlem2} it is possible to compute the deviation of every $\delta_i(\epsilon)$ from its nominal $\delta_i(0)$ value. By considering the difference between the deviations of any pair of allowable and non-allowable structures as a function of $\epsilon$, it can be seen that for sufficiently small $\epsilon$, the condition of Lemma \ref{solvlem} is always met and the problem is therefore $M_2$ solvable.

\begin{mytheo}\label{the1}
Given $G = (I+\epsilon\Lambda_0)G_t$, where $G_t$ is the true transfer matrix, there exists an $r > 0$ such that for all $\epsilon < r$, the problem is always solvable by Method $M_2$. Specifically, the difference between $\delta_n$ for any non-allowable structure and $\delta_a$ for any allowable structure with the same number of links is given by:
\begin{equation}
\delta_n^2(\epsilon) - \delta_a^2(\epsilon) = B_n + F_{na}(\epsilon)
\end{equation}
\noindent where the constant $B_n$ satisfies $B_n > 0$ and $|F_{na}(\epsilon)| < B_n$ for all $\epsilon < r$.
\end{mytheo}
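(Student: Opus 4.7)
The plan is to decompose $\delta_n^2(\epsilon) - \delta_a^2(\epsilon)$ into a strictly positive nominal gap plus a continuous $\epsilon$-dependent remainder that vanishes at the origin, and then invoke Lemma~\ref{solvlem} after a finiteness-plus-continuity argument. The crux is to show (i) $\delta_a^2(0) = 0 < \delta_n^2(0)$ for every allowable/non-allowable pair, and (ii) each $\delta_i^2(\epsilon)$ admits a parameterisation of the form $\delta_i^2(0) + \delta_{full}^2(\epsilon) + \Phi_i(\epsilon)$ with $\Phi_i$ continuous and $\Phi_i(0)=0$.

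For (i), I would note that an allowable $X_a$ contains every true link, so the true internal structure $X_t = P_t^{-1}(I-Q_t)$ belongs to $\mathcal{X}_a$ and achieves zero in \eqref{mindelta} at $\epsilon = 0$; conversely, Corollary~\ref{cor1} identifies $(Q_t,P_t)$ as the \emph{unique} dynamical structure function consistent with $G_t$, so no element of $\mathcal{X}_n$ can reproduce $G_t$ and hence $\delta_n^2(0) > 0$. Setting $B_n := \delta_n^2(0)$ therefore yields a strictly positive constant depending only on $X_n$. For (ii), I would iterate Lemma~\ref{mainlem2} along a chain of single-link constraints from $X_{full}$ to $X_i$: each step contributes a continuous correction that vanishes at $\epsilon = 0$, and telescoping gives
\begin{equation*}
\delta_i^2(\epsilon) = \delta_i^2(0) + \delta_{full}^2(\epsilon) + \Phi_i(\epsilon).
\end{equation*}
Subtracting the expressions for $X_n$ and $X_a$ then yields $\delta_n^2(\epsilon) - \delta_a^2(\epsilon) = B_n + F_{na}(\epsilon)$ with $F_{na} := \Phi_n - \Phi_a$ continuous and $F_{na}(0) = 0$. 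Since there are only finitely many Boolean structures, a single $r > 0$ can be chosen so that $|F_{na}(\epsilon)| < B_n$ holds simultaneously for every pair whenever $\epsilon < r$; the hypothesis of Lemma~\ref{solvlem} is then satisfied and $M_2$ recovers the true structure.

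The principal obstacle will be the telescoping in (ii) when the chain leading to $X_n$ must pass through non-allowable intermediates, because Lemma~\ref{mainlem2}'s expression is cleanest when its parent $X_j$ satisfies $\delta_j^2(0) = 0$. I would address this with the change-of-reference device used in the proof of Lemma~\ref{mainlem2}, treating the current parent as a new ``fully connected'' problem over a smaller set of free parameters so that Lemma~\ref{mainlem1} applies at each step irrespective of allowability. A shortcut, should that bookkeeping prove cumbersome, is to observe directly from \eqref{mindelta2} that $\delta_i^2(\epsilon) = \inf_{Y\in\mathcal{Y}_i}\|A(\epsilon)Y - b\|_2^2$ with $A(\epsilon) = I \otimes (I+\epsilon\Lambda_0)G_t$ affine in $\epsilon$ and $\mathcal{Y}_i$ a fixed linear subspace, so the standard continuous dependence of the least-squares minimum on its data immediately gives continuity of each $\delta_i^2(\epsilon)$ and supplies the $F_{na}$ needed to complete the argument.
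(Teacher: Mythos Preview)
Your proposal is correct and follows essentially the same route as the paper: recursively apply Lemma~\ref{mainlem2} to telescope each $\delta_i^2(\epsilon)$ back to $\delta_{full}^2(\epsilon)$ plus a continuous remainder vanishing at $\epsilon=0$, subtract the allowable and non-allowable expressions, and use finiteness of the set of Boolean structures together with continuity to obtain a uniform $r$, then invoke Lemma~\ref{solvlem}. Your identification of $B_n=\delta_n^2(0)$ is the clean form of the constant; the paper writes $B_n=\sum_i\delta_{n_i}^2(0)$ from the literal recursion, and your remark about the change-of-reference device for non-allowable intermediates is exactly how the paper's proof of Lemma~\ref{mainlem2} justifies each step. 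Your alternative shortcut---direct continuity of the least-squares value $\inf_{Y\in\mathcal{Y}_i}\|A(\epsilon)Y-b\|_2^2$ in the affine data $A(\epsilon)$---is a valid and somewhat more economical way to obtain the same $F_{na}$, bypassing the explicit telescoping, though it gives less structural information about the perturbation than the paper's decomposition.
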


The proof is given in Appendix \ref{appthe} and comprises a recursive application of Lemma \ref{mainlem2} to assess the solvability condition of Lemma \ref{solvlem}. This result proves that every problem is $M_2$ solvable for sufficiently small $\epsilon$, and given $G_t$ and $\Lambda_0$ it is possible to calculate a bound $r$ on the size of this $\epsilon$. Method $M_2$ is then guaranteed to encounter the true structure by successively removing links from the fully-connected structure.

\begin{myrem}[Repeated Experiments]
We note that multiple estimates of $G_t$ are readily incorporated by simply replacing $G$ with a block vector of these estimates.
\end{myrem}

\begin{myrem}[Prior Information]
If something is known about the true Boolean structure, this information can easily be taken into account in Algorithm $M_2$ by restricting the number of structures that must be considered. This will then reduce the computational complexity.
\end{myrem}

\begin{myrem}[Steady-State Reconstruction]
As in \cite{rQP}, if only steady-state measurements are available, we can reconstruct the steady-state dynamical structure function from $G(0)$. Complications arise in the case of transfer functions with zero steady-state gain, but otherwise the results of Section \ref{mainsec} can be directly applied to steady-state reconstruction. In particular, the bound of Theorem \ref{the1} is tighter and significantly easier to compute.
\end{myrem}

\section{A New Model Selection Approach}\label{AICsec}

Here we introduce a method of selecting a candidate Boolean structure from the set $\mathbb{S}$ of Method $M_2$, as an alternative to AIC. Our method does not directly penalise model complexity but rather seeks to locate the correct level by identifying the subsequent loss of information as the complexity is further reduced. Denote the $\delta$ value of $S^i \in \mathbb{S}$ as $\delta^i$ and the number of links of the true structure as $i_t$. Note that for no noise, $\delta^i = 0$ for $i_t \leq i \leq p^2-p$ and $\delta^i > 0$ for $i < i_t$, since the solution is unique. Now define the following normalised derivative of $\delta$:
\begin{equation}
d_i^{\prime} = \frac{\delta^{i-1} - \delta^i}{\delta^{i-1}}, \qquad i = 1,\ldots,p^2-p
\end{equation}

\noindent where $d_0^{\prime} \coloneqq 0$ and $ 0 \leq d_i^{\prime} \leq 1$ since $0 \leq \delta^i \leq \delta^{i-1}$. For low levels of noise, $\delta^i$ is close to zero for all $i \geq i_t$ and increases significantly for $i < i_t$, and it is this increase in $\delta^i$ that we seek to detect. In fact, as the noise level approaches zero, $d_i^{\prime} \rightarrow 1 $ for $i=i_t$. Taking a form of second derivative, defined as follows, was found heuristically to improve the distinction of the true structure:
\begin{equation}
d_i^{\prime\prime} = \max \{d_i^{\prime} - d_{i+1}^{\prime}, 0\}, \qquad i = 0,\ldots,p^2-p
\end{equation}

\noindent where $d^{\prime\prime}_{p^2-p+1} \coloneqq 0$ and $0 \leq d_i^{\prime\prime} \leq 1$. The candidate solution is then selected as $S^j$ where $j=\arg\max_i\{d_i^{\prime\prime}\}$ and a measure of the confidence in this selection is given by $\max_i\{d_i^{\prime\prime}\} \in [0,1]$.

\begin{myrem}[A Single Solution]
Whilst it is useful to obtain a single solution for comparative simulations, in practice, a more prudent approach is advised. For example, the relative merit of each of the structures in the solution sets of $M_1$ or $M_2$ could be considered.
\end{myrem}

\section{Simulations}\label{simsec}

Methods $M_1$ and $M_2$ introduced here were compared in simulation with $M_0$ from \cite{rQP} on steady-state network reconstruction of a large number of linear test networks. Networks with three measured states and up to three hidden states were considered; for each of the 64 possible Boolean network structures with three measured states, 300 random, stable linear systems were generated. For each test system and for a range of noise variance from $10^{-5}-10$, three experimental estimates of $G(0)$ were obtained as $G(0) = (I+\Lambda)G_t(0)$, where $G_t$ is the true transfer matrix and the elements of $\Lambda$ were sampled from a zero-mean normal distribution. These estimates of $G(0)$ were then used by each method to attempt to obtain the correct steady-state network structure, given no other information about the true network. In total, $963,900$ network reconstruction problems were considered.

Fig. \ref{fig1} shows the average number of networks that were correctly identified by each method, for each level of noise, as a percentage of the total number of reconstruction problems attempted. Some variation in performance was observed between different network structures (some were harder to identify than others) but the results of Fig. \ref{fig1} are representative of the performance of each of the methods. Also shown in Fig. \ref{fig1} is the $M_1$ solvable limit, which is the percentage of reconstruction problems that were $M_1$ solvable.

The most important result is that the performances of $M_1$ and $M_2$ are almost identical, validating the use of $M_2$ for this problem class. Only $0.37\%$ of problems could be solved by $M_1$ but not by $M_2$, which is certainly justified by the significant reduction in computational complexity. The level of noise required for $M_2$ to fail is apparently similar to that required for $M_1$ to fail. In addition, $M_1$ and $M_2$ (both using the model selection procedure of Section \ref{AICsec}) consistently outperform $M_0$ for all noise levels. For example, for a noise variance of $10^{-3}$, approximately $60\%$ of problems could be solved by $M_0$, whereas $90\%$ could be solved by $M_1$ and $M_2$. 

Figure \ref{fig2} shows the percentage of reconstruction problems for which the set of possible Boolean solutions obtained by each method contained the true Boolean structure. Again the results for methods $M_1$ and $M_2$ are almost identical. For a noise variance of $10^{-2}$, only approximately $30\%$ of problems could be solved by $M_0$, whereas for $M_1$ and $M_2$ we could be $90\%$ certain that the correct Boolean structure is in the set.

\begin{figure}[t]
\centering
\includegraphics[width=0.45\textwidth]{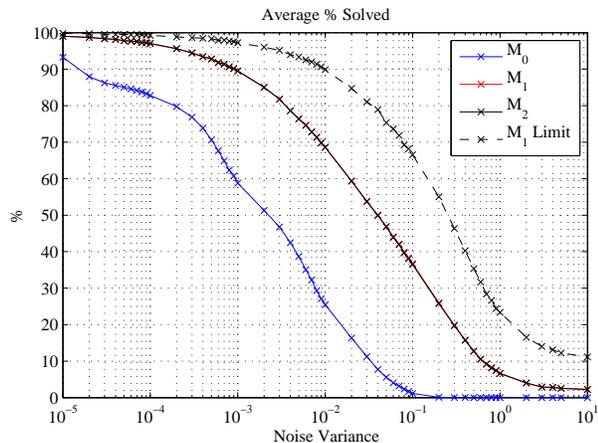}
\caption{Percentage of reconstruction problems successfully solved by each method. $M_0$ is the method from \cite{rQP} with exponential complexity; $M_1$ is an intermediate method also with exponential complexity; $M_2$ is the polynomial complexity method. Also shown is the $M_1$ solvable limit, which is the percentage of reconstruction problems that were solvable by $M_1$ (and $M_0$). The difference between this curve and that of $M_0$ and $M_1$ is due to the incorrect solution being chosen in the model selection stage.}
\label{fig1}
\end{figure}

\section{Conclusions}\label{concsec}
\subsection{Summary}
This paper introduces an algorithm with polynomial complexity that robustly reconstructs the structure and dynamics of an unknown LTI network in the presence of noise and unmodelled nonlinearities. Specifically, we estimate the dynamical structure function from a noisy estimate of the system transfer matrix. Rather than seek to obtain a sparse solution, we consider a set of solutions spanning all levels of sparsity and then select a solution from this set. Following a certain experimental protocol, we prove that every such problem is solvable by our method if the magnitude of the noise is sufficiently small, where the size of this bound depends on the properties of the system in question. The expected performance of this method is assessed in simulation, which demonstrates almost no difference in performance between the exponential and polynomial complexity versions and also shows significant improvements over the previous method.

\subsection{Future Work}
Future work will consider exactly what properties of the true system contribute to the size of the noise bound for that system. This may provide insight to develop a necessary and sufficient condition for solvability, given a certain type of noise perturbation. We also seek to relax the conditions of the experimental protocol, which is another limiting factor in the size of problem that this approach can handle.

\begin{figure}[t]
\centering
\includegraphics[width=0.45\textwidth]{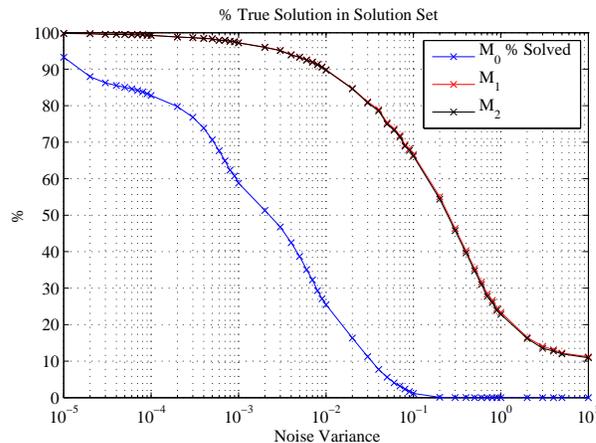}
\caption{Percentage of reconstruction problems where the solution set obtained by each method contained the true solution. $M_0$ is the method from \cite{rQP} with exponential complexity; $M_1$ is an intermediate method also with exponential complexity; $M_2$ is the polynomial complexity method. Note that the solution set of method $M_0$ only contains a single solution.}
\label{fig2}
\end{figure}

\section{Acknowledgements}
This work was supported by EPSRC grants EP/I03210X/1 and EP/P505445/1. 
The authors would also like to thank the reviewers for their helpful comments.

\appendix

\subsection{Proof of Lemma \ref{optlem}} \label{appopt}

\begin{proof}

Equation \eqref{mindelta2} can be written as a constrained optimisation problem as follows:
\begin{equation}
\delta_i^2 = \inf_{Y} \|AY - b\|_2^2 \qquad \textrm{subject to } \quad C_i\rq{}Y = 0
\end{equation}

\noindent where $C_i$ contains the relevant columns of the $p^2 \times p^2$ identity matrix. The constraints are appended to the cost function by a vector of Lagrange multipliers $\mu_i$:
\begin{equation}\label{di_opt}
\begin{aligned}
&\delta_i^2 = \inf_{Y} \frac{1}{2\pi} \int_{-\infty}^{\infty} |AY-b|^2 + \mu_i\rq{}C_i\rq{}Y \; d\omega\\
&C_i\rq{}Y = 0
\end{aligned}
\end{equation}

\noindent with $s=j\omega$ for all transfer functions. Solving for $Y_i$ and $\mu_i$ yields:
\begin{equation}\label{Yi_sol}
\begin{aligned}
&Y_i = (A\rq{}A)^{-1}\left(A\rq{}b - \frac{1}{2}C_i\mu_i\right)\\
&\mu_i = 2\left(C_i\rq{}(A\rq{}A)^{-1}C_i\right)^{-1}C_i\rq{}(A\rq{}A)^{-1}A\rq{}b
\end{aligned}
\end{equation}

\noindent The following abbreviations are made to simplify notation: $A^+ = (A\rq{}A)^{-1}$, $D_i = C_i\rq{}A^+C_i$ and $E_i = C_iD_i^{-1}C_i\rq{}A^+$. It is noted that the fully-connected (unconstrained) solution is $Y_{full} = A^+A\rq{}b$ and the corresponding $\delta_{full}$ given by:
\begin{equation}
\delta_{full}^{2} = \frac{1}{2\pi} \int_{-\infty}^{\infty} b\rq{}\left(I - AA^+A\rq{}\right)b \;d\omega
\end{equation}

\noindent Using \eqref{di_opt} and \eqref{Yi_sol}, for the $i^{th}$ Boolean structure the solution is $Y_i = A^+\left(I-E_i\right)A\rq{}b$ and the optimum $\delta_i$ is given as follows:
\begin{equation}
\begin{aligned}\label{di_sol}
\delta_i^{2} &= \frac{1}{2\pi} \int_{-\infty}^{\infty} b\rq{}\left( I - AA^+A\rq{} + AE_i\rq{}A^+E_iA\rq{} \right)b \;d\omega\\
&= \delta_{full}^{2} + \frac{1}{2\pi} \int_{-\infty}^{\infty} b\rq{}AE_i\rq{}A^+E_iA\rq{}b \;d\omega\\
&= \delta_{full}^{2} + \frac{1}{2\pi} \int_{-\infty}^{\infty} b\rq{}AA^+C_iD_i^{-1}C_i\rq{}A^+A\rq{}b \;d\omega\\
&= \delta_{full}^{2} + \frac{1}{2\pi} \int_{-\infty}^{\infty} Y_{full}\rq{}C_iD_i^{-1}C_i\rq{}Y_{full} \;d\omega
\end{aligned}
\end{equation}

\noindent where $C_i\rq{}Y_{full}$ is the vector of the elements of $Y_{full}$ that are constrained in the $i^{th}$ Boolean structure. Equation \eqref{di_sol} expresses the optimum $\delta$ value of every Boolean structure in terms of the $\delta$ value of the fully-connected structure plus the effect of constraining some of the links to be zero.

The matrix $D_i$ is block-diagonal and composed of elements of $A^+$, which is a block-diagonal matrix given by $A^+ = I \otimes (G\rq{}G)^{-1}$, where $I$ is the $p\times p$ identity matrix. If no more than one element is constrained in each column of $X$, then $D_i$ is diagonal and composed of diagonal elements of $A^+$. In this special case, \eqref{di_sol} reduces to:
\begin{equation}\label{di_singles}
\delta_i^{2} = \delta_{full}^{2} + \frac{1}{2\pi} \int_{-\infty}^{\infty} \sum_{j}\left(\frac{|Y_{full}(j)|^2}{A^+(j,j)}\right)  \;d\omega
\end{equation}

\noindent for all elements $j$ of $Y$ that are constrained in the $i^{th}$ structure.
\end{proof}

\subsection{Proof of Lemma \ref{mainlem1}} \label{applem1}

\begin{proof}
Since $X_i$ has only one zero element, Lemma \ref{optlem} results in:
\begin{equation}\label{di_single}
\delta_{i}^2(\epsilon) - \delta_{full}^2(\epsilon) = \frac{1}{2\pi} \int_{-\infty}^{\infty} \frac{|Y_{full}(j)|^2}{A^+(j,j)}  \;d\omega
\end{equation}

\noindent where $j$ is the index of the constrained element in $Y_{full}$. Since $G = (I+\epsilon\Lambda_0)G_t$, $A$ can be written as $A = (I+(I \otimes \epsilon\Lambda_0))A_t$, where $A_t = I \otimes G_t$. Due to the equivalence of dynamic uncertainty models it is possible to parameterise the uncertainty in a number of forms, which allows us to write: $(A\rq{}A)^{-1} = (A_t\rq{}A_t)^{-1}(I + \Lambda_1(\epsilon))$ for some $\Lambda_1(\epsilon)$ which is also a function of $\Lambda_0$ and $A_t$. Similarly we can write: $(A\rq{}A)^{-1}A\rq{} = (A_t\rq{}A_t)^{-1}A_t\rq{}(I + \Lambda_2(\epsilon))$ for some other $\Lambda_2(\epsilon)$.

Denoting $A_t^+ = (A_t\rq{}A_t)^{-1}$, $\bar{A}_t = A_t^+A_t\rq{}$ and $\bar{A} = A^+A\rq{}$, the integrand of \eqref{di_single} is given by:
\begin{equation}
\begin{aligned}
\frac{|Y_{full}(j)|^2}{A^+(j,j)} &= \frac{\left|\bar{A}(j,:)b\right|^2} {A^+(j,j)} \\
&= \frac{\left|\bar{A}_t(j,:)(I + \Lambda_1)b\right|^2} {A_t^+(j,:) (I(:,j) + \Lambda_2(:,j))} \\
&= \frac{\left|\bar{A}_t(j,:)b\right|^2} {A_t^+(j,j)} \left( 1 - \alpha_j(\epsilon)\right) + \beta_j(\epsilon)
\end{aligned}
\end{equation}
\noindent where $(j,:)$ and $(:,j)$ index respectively the $j^{th}$ row and column of a matrix, and the dependence of $\Lambda_1$ and $\Lambda_2$ on $\epsilon$ has been omitted. The functions $\alpha_j(\epsilon)$ and $\beta_j(\epsilon)$ are continuous functions of $\epsilon$, satisfying $\alpha_j(0) = 0$ and $\beta_j(0) = 0$. Note that $\delta_i^2(0) = \frac{1}{2\pi} \int_{-\infty}^{\infty} \frac{\left|\bar{A}_t(j,:)b\right|^2} {A_t^+(j,j)} \;d\omega$ and hence \eqref{di_single}  can be written:
\begin{equation}\label{di_single2}
\delta_{i}^2(\epsilon) - \delta_{full}^2(\epsilon) = \delta_i^2(0) + f_i(\epsilon)
\end{equation}

\noindent where $f_i(\epsilon) = \frac{1}{2\pi} \int_{-\infty}^{\infty} \beta_j(\epsilon) - \frac{\left|\bar{A}_t(j,:)b\right|^2} {A_t^+(j,j)} \alpha_j(\epsilon)  \;d\omega$ is also a continuous function of $\epsilon$ and satisfies $f_i(0) = 0$.
\end{proof}

\subsection{Proof of Lemma \ref{mainlem2}} \label{applem2}

\begin{proof}
Define $\tilde{Y}_j$ as $Y$ with every element that is constrained to be zero in the $j^{th}$ Boolean structure removed. Similarly $\tilde{A}_j$ is obtained from $A$ by removing the columns of $A$ corresponding to the constrained elements of $Y_j$. Taking $\tilde{Y}_j$ as the unconstrained minimising argument of $\delta_j^2 = \inf_{\tilde{Y}_j} \|\tilde{A}_j\tilde{Y}_j- b\|_2^2$, Lemma \ref{mainlem1} gives:
\begin{equation}\label{di_singleij}
\delta_{i}^2(\epsilon) - \delta_j^2(\epsilon) = \frac{1}{2\pi} \int_{-\infty}^{\infty} \frac{|\tilde{Y}_j(k)|^2}{\tilde{A}_j^+(k,k)}  \;d\omega
\end{equation}

\noindent where $k$ is the index of the constrained element in $\tilde{Y}_j = \tilde{A}_j^+\tilde{A}_j\rq{}b$. Recall that $A_t = I \otimes G_t$ and obtain $\tilde{A}_t$ from $A_t$ by removing the columns of $A_t$ corresponding to the constrained elements of $Y_j$. Then we can write $\tilde{A}_j = (I+(I \otimes \epsilon\Lambda_0))\tilde{A}_t$ and this problem is now in the same form as that of Lemma \ref{mainlem1} and the proof follows as before.
\end{proof}

\subsection{Proof of Theorem \ref{the1}} \label{appthe}

\begin{proof}
A problem is $M_2$ solvable from Lemma \ref{solvlem} if for every allowable structure $Y_a$ and every non-allowable structure $Y_n$ with the same number of links, $\delta_n^2(\epsilon) > \delta_a^2(\epsilon)$. From Lemma \ref{mainlem2}, for every allowable structure $Y_a$, $\delta_a^2(\epsilon)$ is given by:
\begin{equation}
\begin{aligned}
\delta_a^2(\epsilon) &= \delta_{a_1}^2(\epsilon) + f_{a,a_1}(\epsilon) \\
&= \delta_{full}^2(\epsilon) + f_{a,a_1}(\epsilon) + \sum_i f_{a_i,a_{i+1}}(\epsilon) \\
&= \delta_{full}^2(\epsilon) + F_a(\epsilon)
\end{aligned}
\end{equation}

\noindent where $a_i$ denotes an allowable structure with $i$ links more than $Y_a$, from which $Y_a$ can be obtained by removing links. Here we have used the facts that any allowable structure can be obtained by removing one link from another allowable structure and that $\delta_a(0) = 0$ for all allowable structures due to solution uniqueness from Corollary \ref{cor1}. From Lemma \ref{mainlem2}, the functions $f_{a_i,a_{i+1}}(\epsilon)$ are continuous and satisfy $f_{a_i,a_{i+1}}(0) = 0$, hence $F_a(\epsilon)$ (which is a sum of these functions) also satisfies these properties.

Similarly, for every non-allowable structure $Y_n$:
\begin{equation}
\begin{aligned}
\delta_n^2(\epsilon) &= \delta_n^2(0) + \delta_{n_1}^2(\epsilon) + f_{n,n_1}(\epsilon) \\
&= \sum_i \delta_{n_i}^2(0) + \delta_{full}^2(\epsilon) + \sum_i f_{n_i,n_{i+1}}(\epsilon) \\
&= B_n + \delta_{full}^2(\epsilon) + F_n(\epsilon)
\end{aligned}
\end{equation}

\noindent where $n_i$ denotes a non-allowable structure with $i$ links more than $Y_n$, from which $Y_n$ can be obtained by removing links. The constant $B_n = \sum_i \delta_{n_i}^2(0) > 0$ again due to solution uniqueness from Corollary \ref{cor1} and, as in the allowable case, $F_n(\epsilon)$ is continuous and satisfies $F_n(0) = 0$. The solvability condition is then:
\begin{equation}\label{na_solvability}
\delta_n^2(\epsilon) - \delta_a^2(\epsilon) = B_n + F_n(\epsilon) - F_a(\epsilon) > 0
\end{equation}

\noindent where the difference between $\delta_n$ and $\delta_a$ is characterised by a perturbation from each of their nominal ($\epsilon = 0$) values. From the properties of $F_n$ and $F_a$, the function $F_{na} = F_n - F_a$ is also continuous and satisfies $F_{na}(0) = 0$ and hence there always exists some $r_{na} > 0$ such that $|F_{na}(\epsilon)| < B_n$ for all $\epsilon < r_{na}$. The solvability condition of \eqref{na_solvability} is then satisfied as follows:
\begin{equation}
\begin{aligned}
\delta_n^2(\epsilon) - \delta_a^2(\epsilon) &= B_n + F_{na}(\epsilon) \\
&\geq B_n - |F_{na}(\epsilon)| \\
&> 0
\end{aligned}
\end{equation}

\noindent for all $\epsilon < r_{na}$. Taking $r$ to be the minimum $r_{na}$ over all pairs of allowable and non-allowable structures will therefore ensure the problem is $M_2$ solvable for all $\epsilon < r$.
\qedhere
\end{proof}

\end{document}